\documentclass[a4paper,11pt,leqno,final]{article}

\usepackage[T1]{fontenc}
\usepackage[utf8]{inputenc}

\usepackage{amsmath,amssymb,amsthm}
\usepackage{dsfont}
\usepackage{verbatim}

\usepackage[a4paper,margin=40mm]{geometry}

\usepackage{fancyhdr}

\usepackage{color}
\newcommand{\rene}{\color{red}}

\newcommand{\normal}{\color{black}}
\newcommand{\Ee}{\mathds{E}}
\newcommand{\nat}{\mathds{N}}
\newcommand{\real}{\mathds{R}}
\newcommand{\rd}{{\mathds{R}^d}}

\newcommand{\I}{\mathds{1}}
\newcommand\supp{\mathop{\operatorname{supp}}}

\newcommand{\dup}[1]{\mathrm{d}#1}

\newcommand{\form}{\mathcal{E}}
\newcommand{\dom}{\mathcal{F}}
\newcommand{\loc}{\mathrm{loc}}
\newcommand{\lip}{\mathrm{lip}}
\newcommand{\sbullet}{\text{\tiny\textbullet}}

\newcommand{\scalp}[2]{\langle #1,#2\rangle}
\newcommand{\entier}[1]{\lfloor#1\rfloor}


\numberwithin{equation}{section}
\swapnumbers

\newtheorem{theorem}{\uppercase{Theorem}}[section]
\newtheorem{lemma}[theorem]{\uppercase{Lemma}}
\newtheorem{corollary}[theorem]{\uppercase{Corollary}}

\theoremstyle{definition}

\newtheorem{remark}[theorem]{\textit{Remark}}
\newtheorem{example}[theorem]{\textit{Example}}

\title{\textbf{\uppercase{Homogenization of Symmetric L\'evy Processes on $\rd$}}}
\pagestyle{fancy}
\chead{\textsc{Homogenization of L\'evy Processes}} 
\lhead{}\rhead{}

\author{\uppercase{Ren\'e L. Schilling} \and \uppercase{Toshihiro Uemura}}

\date{\footnotesize Institut f\"{u}r Mathematische Stochastik, Fakult\"at Mathematik,
Technische Universit\"at Dresden\\ 01062 Dresden, Germany\\
Department of Mathematics, Faculty of Engineering Science, Kansai University\\ Suita-shi, Osaka 564-8680, Japan}

\date{Dedicated to the memory of Professor Nicu Boboc\\{\small To appear in \itshape Revue Roumaine des Math\'ematiques Pures et Appliqu\'ees}
}

\begin{document}

\maketitle

\begin{abstract}\noindent
    In this short note we study homogenization of symmetric $d$-dimensional L\'evy processes. Homogenization of one-dimensional pure jump Markov processes has been investigated by Tanaka \emph{et al.}\ in \cite{HIT77}; their motivation was the work by Benssousan \emph{et al.}\ \cite{BLP75} on the homogenization of diffusion processes in $\rd$, see also \cite{BLP78} and \cite{T92}. We investigate a similar problem for a class of symmetric pure-jump L\'evy processes on $\rd$ and we identify -- using Mosco convergence -- the limit process.

    \medskip
    \noindent\textbf{MSC 2010} \emph{Primary}: 60G51; \emph{Secondary}: 31C25; 60J45.

    \smallskip
    \noindent\textbf{Key Words}: L\'evy process; Dirichlet form; homogenization; Mosco convergence.
\end{abstract}

A symmetric L\'evy process $(X_t)_{t\geq 0}$ is a stochastic process in $\rd$ with stationary and independent increments, c\`adl\`ag paths and symmetric laws $X_t\sim -X_t$. We can characterize the (finite-dimensional distributions of the) process by its characteristic function $\Ee e^{i\scalp{\xi}{X_t}}$, $\xi\in\rd$, $t>0$, which is of the form $\exp(-t\psi(\xi))$; due to the symmetry of $X_t$, the characteristic exponent $\psi$ is real-valued. It is given by the L\'evy--Khintchine formula
\begin{equation}\label{lkf}
    \psi(\xi) = \frac 12\scalp{\xi}{\Sigma\xi} + \int_{h\neq 0}\big(1-\cos\scalp{\xi}{h}\big)\,\nu(\dup{h}),\quad\xi\in\real^d.
\end{equation}
$\Sigma\in\real^{d\times d}$ is the positive semidefinite \emph{diffusion matrix} and $\nu(\dup{h})$ is the \emph{L\'evy measure}, that is a Radon measure on $\rd\setminus\{0\}$ such that $\int_{h\neq 0} \big(1\wedge |h|^2\big)\,\nu(\dup{h})$ is finite. It is clear from \eqref{lkf} that we have $\nu(\dup{h})=\nu(-\dup{h})$.
Throughout this paper we assume that $\Sigma\equiv 0$ and that $\nu(\dup{h})$ has a (necessarily symmetric) locally bounded density on $\rd\setminus\{0\}$ w.r.t.\ Lebesgue measure; in abuse of notation we write $\nu(\dup{h}) = \nu(h)\,\dup{h}$.

Let $Q = (0,1)^d$ be the open unit cube in $\rd$ and $a:\rd\to\real$ a function in $L^p_{\loc}(\rd)$ for some $1<p\leq \infty$.
We assume that $a(x)=a(-x)$ for $x\in \real^d$ and $a$ is \emph{$Q$-periodic} in the sense that
\begin{equation}\label{periodic-1}
    a(h + k e_i) = a(h) > 0
    \ \  \text{for all\ $k \in \mathds{Z},\; i=1,2,\ldots,d$ \ and a.a.\ } h\in Q;
\end{equation}
as usual, $e_i$ denotes the $i$th unit vector of $\rd$. By $\overline{a}$ we denote the \emph{mean value} of $a$,
\begin{gather}\label{periodic-2}
    \overline{a} := \int_Q a(h)\,\dup{h}.
\end{gather}
We assume that $a_{\delta}(h) :=a\left(\delta^{-1} h\right)$ satisfies
\begin{gather}
\label{periodic-3}
    \int_{h\neq 0} \big(1 \wedge |h|^2\big) a_{\delta}(h) \nu(h)\,\dup{h} <\infty \ \ \text{for all\ \ }\delta>0,\\
\label{periodic-4}
    \sup_{\delta>0} \int_{|h|\geq 1} a_{\delta}(h) \nu(h)\,\dup{h} <\infty.
\end{gather}

For each $\delta>0$ we consider the following quadratic form on $L^2(\rd)$ which is defined for Lipschitz continuous
functions with compact support $u,v \in C_0^{\lip}(\rd)$
\begin{equation} \label{form-delta}
    \form^{\delta}(u,v)
    := \iint\limits_{\rd\times\rd} \big(u(x)-u(y)\big)\big(u(x)-u(y)\big) a_{\delta}(y-x) \nu(y-x)\,\dup{y}\,\dup{x}.
\end{equation}
From the assumptions \eqref{periodic-1} and \eqref{periodic-3}, we easily see that $(\form^\delta, C_0^{\lip}(\rd))$
is a closable symmetric form in $L^2(\rd)$ which is translation invariant, see \cite{FOT11}.  Its closure $(\form^{\delta},
\dom^{\delta})$ is a translation invariant regular symmetric Dirichlet form in $L^2(\rd)$, and the associated stochastic
process is a symmetric L\'evy process. If we use \eqref{lkf} and some elementary Fourier analysis, we obtain the following
characterization of the Dirichlet form $(\form^{\delta}, \dom^{\delta})$ based on the characteristic exponent
$\psi_{\delta}$, cf.\ \cite[Example~4.7.28]{Jac01} and \cite[Example~1.4.1]{FOT11},
\begin{gather*}
\left\{\begin{aligned}
    \form^{\delta}(u,v) &= \int_{\rd} \widehat{u}(\xi)\overline{\widehat{v}(\xi)} \psi_\delta(\xi)\, \dup{\xi} \\
    \dom^\delta &= \left\{ u \in L^2(\rd) : \  \int_{\rd} \big|\widehat{u}(\xi)\big|^2 \psi_\delta(\xi)\, \dup{\xi} <\infty \right\},
\end{aligned}\right.
\end{gather*}
$\widehat u(\xi) = (2\pi)^{-d} \int_{\rd} e^{-i\scalp\xi x} u(x)\,\dup{x}$ denotes the Fourier transform and
\begin{equation}\label{symm-levy-exponent}
    \psi_\delta(\xi)=\int_{h\neq 0} \big(1-\cos \scalp\xi h \big) a_\delta(h) \nu(h)\, \dup{h}, \quad \xi\in \rd.
\end{equation}

Condition \eqref{periodic-3} ensures that $a_\delta(h)\nu(h)$ is the density of a L\'evy measure.
If $\nu(h)$ is the density of a L\'evy measure and if $a$ is a bounded, nonnegative (and $1$-periodic) function,
then \eqref{periodic-3} clearly holds. The following example illustrates that for \emph{unbounded} functions $a$
the situation is different.

\begin{example}\label{exa}%
\textbf{a)}\ \
    Let $0<\beta<2$ and pick some $\delta$ such that $0< \delta<1 \wedge(2-\beta)$.
    Define functions $\alpha_0$ on $[0,1/2]$ and $\alpha_1$ on $[0, 1]$ by
    \begin{gather*}
    \alpha_0(x)
    := \begin{cases}
        0, &  x=0,\\
        x^{-\delta}, & 0<x\le \frac 14,\\
        4^{\delta}, & \frac 14 \le x \le \frac 12,
    \end{cases}
    \quad\text{and}\quad
    \alpha_1(x)
    := \begin{cases}
        \alpha_0(x), & 0\le x\le \frac 12,\\
        \alpha_0(1-x), & \frac 12 \le x \le 1.
    \end{cases}
    \end{gather*}
    Denote by $a:\real\to\real$ the $1$-periodic extension of $\alpha_1$ to the real line. It is obvious that
    $a \in L^p_{\loc}(\real)$ for all $1<p<1/\delta$. Define a further function $b=b(x)$ on $\real$ by
    $b(x):=a(x-1/2)$ for $x\in \real$ and set
    \begin{gather*}
        \nu(h)= \frac {b(h)}{|h|^{1+\beta}}, \quad h\neq 0.
    \end{gather*}
    Clearly, $\nu(h)=\nu(-h)$; let us show that $a(h)\nu(h)$ is the density of a L\'evy measure,
    \emph{i.e.}\ $\int_{h\neq 0} \big(1\wedge h^2\big) a(h)\nu(h)\,\dup{h} <\infty$.

    Since $a$ and $\nu$ are even functions, we see
    \begin{align*}
    \int_{h\neq 0} \big(1\wedge h^2\big) a(h) \nu(h)\,\dup{h}
        &= 2 \int_0^1 h^2  a(h) \nu(h)\,\dup{h} + 2 \sum_{\ell=1}^{\infty} \int_{\ell}^{\ell+1} a(h) \nu(h)\,\dup{h}.
    \end{align*}
    For the first term we get
    \begin{align*}
        \int_0^1 h^2  a(h) \nu(h)\,\dup{h}
        &= \int_0^1 h^2 a(h) b(h)  h^{-1-\beta}\,\dup{h}\\
        &= 4^{\delta}\int_0^{1/4} h^{1-\delta-\beta}\,\dup{h}
            + 4^{\delta} \int_{1/4}^{1/2} h^{1-\beta} (1/2-h)^{-\delta}\,\dup{h}\\
        &\quad\mbox{}+ 4^{\delta} \int_{1/2}^{3/4} h^{1-\beta}(h-1/2)^{-\delta}\,\dup{h}
            + 4^{\delta}\int_{3/4}^1 h^{1-\beta} (1-h)^{-\delta}\,\dup{h}\\
        &=: c(\delta)<\infty.
    \end{align*}
    The integrals under the sum appearing in the second term can be estimated using the periodicity of $a$ and $b$; for all $\ell\geq 1$ we have
    \begin{align*}
        \int_{\ell}^{\ell+1} a(h) \nu(h)\,\dup{h}
        &= \int_0^1 a(h+\ell) b(h+\ell) (h+\ell)^{-1-\beta}\,\dup{h}\\
        &= \int_0^1 a(h) b(h) (h+\ell)^{-1-\beta}\,\dup{h}\\
        &\leq \ell^{-1-\beta}\int_0^1 a(h) b(h) \,\dup{h}.
    \end{align*}
   As in the previous calculation, and noting that $0<\delta<1$, we again see that
    \begin{align*}
        \int_0^1 a(h) b(h) \,\dup{h}
        &= 4^{\delta} \int_0^{1/4} h^{-\delta}\,\dup{h} + 4^{\delta} \int_{1/4}^{1/2}  \big(1/2-h)^{-\delta}\,\dup{h} \\
        &\qquad\mbox{} + 4^{\delta} \int_{1/2}^{3/4} (h-1/2)^{-\delta}\,\dup{h} + 4^{\delta} \int_{3/4}^1 (1-h)^{-\delta}\,\dup{h}
        <\infty.
    \end{align*}
   Thus, $c:= \int_0^1 a(h) b(h) \,\dup{h}<\infty$, and
    \begin{gather*}
        \int_{h\neq 0} \big(1\wedge h^2\big) a(h) \nu(h)\,\dup{h}
        \leq 2c(\delta) + c\sum_{\ell=1}^{\infty} \ell^{-1-\beta}
        <\infty.
    \end{gather*}

    On the other hand, we also find that
    \begin{align*}
        \int_{h\neq 0} \big(1\wedge h^2\big) a_{1/2}(h)\nu(h)\,\dup{h}
        &=\int_{h\neq 0} \big(1\wedge h^2\big) a(2h) b(h) |h|^{-1-\beta}\,\dup{h} \\
        &\geq \int_{3/8}^{1/2} h^2 a(2h) b(h) h^{-1-\beta}\,\dup{h}\\
        &= \int_{3/8}^{1/2} h^{1-\beta} (1-2h)^{-\delta} (1/2-h)^{-\delta} \, \dup{h} \\
        &= 2^\delta \int_{3/8}^{1/2} h^{1-\beta} (1-2h)^{-2\delta}\,\dup{h},
    \end{align*}
    and this integral blows up if $0<\beta<3/2$ and $1/2\le \delta<1\wedge(2-\beta)$. In a similar way we can show that
    \begin{gather*}
        \int_{h\neq 0} \big(1\wedge h^2\big) a_{\delta}(h)\nu(h)\,\dup{h} = \infty
    \end{gather*}
    for infinitely many $\delta>0$.

\medskip\noindent
\textbf{b)}\ \
        Let $a=a(x)$ on $\real$ be as in the previous part. Set $\nu(h)=|h|^{-1-\beta}$ for $h\neq 0$.
        Then we can show that this pair $(a, \nu)$ satisfies the conditions \eqref{periodic-1}--\eqref{periodic-3}.
\end{example}

We will now discuss the limit of $(\form^\delta,\dom^\delta)$ as $\delta\downarrow 0$. To this end, we take a sequence of positive numbers $\{\delta_n\}_{n\in\mathds{N}}$ such that $\delta_n \downarrow 0$ as $n\to \infty$. The following result is a standard result from homogenization theory. Usually it is stated in terms of $L^p$ convergence (rather than $L^p_\loc$ convergence), see e.g.\ \cite[Theorem 2.6]{cio-don}.

\begin{lemma}\label{cor-1}
    Suppose that \eqref{periodic-1} and \eqref{periodic-3} hold. The family $\{a_{\delta_n}\}_{n\in\mathbb{N}}$
    converges to the constant $\overline{a} := \int_Q a(h)\,\dup{h}$ weakly in $L^p_{\loc}(\rd)$, $1<p<\infty$,
    i.e.\ for any compact set $K$ of $\rd$,
    \begin{equation}\label{eq:cor-1}
        \lim_{n\to \infty} \int_K g(x) a_{\delta_n}(x)\, \dup{x} = \overline{a} \int_K g(x)\,\dup{x}, \quad g\in L^q(K),
    \end{equation}
    where $p$ and $q$ are conjugate $1/p+1/q=1$.
\end{lemma}

We will need the following corollary of Lemma~\ref{cor-1}.
\begin{corollary}\label{cor-2}
    Assume that \eqref{periodic-1}--\eqref{periodic-3} hold and let $\{\delta_n\}_{n\in\nat}$ be a monotonically decreasing
    sequence of positive numbers such that $\delta_n \to 0$ as $n\to \infty$. For any compact set $K \subset \rd\times\rd$,
    let $g_n\in L^q(K)$ be a sequence of functions which converges in $L^q$ to some $g\in L^q(K)$.
    Then the following limit exists
    \begin{equation}\label{lem-limit-3}
        \lim_{n\to\infty} \iint_{K} g_n(x,y) a_{\delta_n}(x-y)\, \dup{x}\,\dup{y}
        = \overline{a} \iint_{K} g(x,y)\,\dup{x}\,\dup{y}.
    \end{equation}
\end{corollary}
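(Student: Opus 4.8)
The plan is to write the difference between the two sides of \eqref{lem-limit-3} as a sum of two pieces. Setting
\[
    A_n := \iint_K \big(g_n(x,y)-g(x,y)\big)\,a_{\delta_n}(x-y)\,dx\,dy
\]
and
\[
    B_n := \iint_K g(x,y)\,a_{\delta_n}(x-y)\,dx\,dy - \bar a\iint_K g(x,y)\,dx\,dy,
\]
one has $\iint_K g_n\,a_{\delta_n}(x-y)\,dx\,dy - \bar a\iint_K g\,dx\,dy = A_n + B_n$. The first term isolates the perturbation $g_n-g$, which vanishes in $L^q(K)$, while the second is the two-variable analogue of Corollary~\ref{cor-1} for a fixed integrand $g$.

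For $A_n$ I would apply H\"older's inequality to obtain $|A_n|\le \|g_n-g\|_{L^q(K)}\big(\iint_K a_{\delta_n}(x-y)^p\,dx\,dy\big)^{1/p}$. The first factor tends to $0$ by assumption, so it suffices to bound the $L^p$-mass of $(x,y)\mapsto a_{\delta_n}(x-y)$ over $K$ uniformly in $n$. Enclosing $K$ in a product of cubes $[-M,M]^d\times[-M,M]^d$ and carrying out the change of variables $z=x-y$ (with $y$ kept fixed) reduces this double integral to a constant multiple of $\int_{[-2M,2M]^d} a_{\delta_n}(z)^p\,dz$, which is exactly the quantity shown to be bounded uniformly in $\delta\in(0,1)$ in the proof of Corollary~\ref{cor-1}. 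Hence $A_n\to 0$.

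For $B_n$ the clean route is the same change of variables. If $g$ were continuous with compact support, then $G(z):=\int_{\rd} g(y+z,y)\,dy$ defines a function in $C_0(\rd)$, and Fubini's theorem turns $\iint_K g(x,y)\,a_{\delta_n}(x-y)\,dx\,dy$ into $\int_{\rd} G(z)\,a_{\delta_n}(z)\,dz$; Lemma~\ref{lem-limit-2} then yields convergence to $\bar a\int_{\rd} G(z)\,dz = \bar a\iint_K g\,dx\,dy$, i.e.\ $B_n\to 0$. For general $g\in L^q(K)$ I would approximate $g$ (extended by zero off $K$) in $L^q$ by such continuous $\phi$ and split $B_n$ accordingly; the error terms coming from $g-\phi$ are controlled, uniformly in $n$, by the same uniform $L^p$-bound on $a_{\delta_n}(x-y)$ together with H\"older, exactly as for $A_n$, while the main term built from $\phi$ converges by the continuous case.

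The main obstacle is the uniform-in-$n$ control of the $L^p$-mass of $(x,y)\mapsto a_{\delta_n}(x-y)$ on $K$: without it neither the H\"older estimate for $A_n$ nor the density argument for $B_n$ closes. The essential point is that the rescaled, possibly unbounded periodic weight $a_{\delta_n}$ does not concentrate as $\delta_n\downarrow 0$, and this is precisely what the change of variables combined with the periodicity estimate from Corollary~\ref{cor-1} delivers.
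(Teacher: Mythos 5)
Your proof is correct and follows essentially the same route as the paper: the same splitting into the $g_n-g$ term (handled by H\"older together with the uniform-in-$n$ bound on $\iint_K a_{\delta_n}(x-y)^p\,dx\,dy$ obtained exactly as in the proof of Corollary~\ref{cor-1}) and the fixed-$g$ term (handled by the change of variables $z=x-y$). The only cosmetic difference is that for the second term the paper applies Corollary~\ref{cor-1} directly to the compactly supported $L^q$-function $H(z)=\int_{\rd}\I_K(y+z,y)\,g(y+z,y)\,dy$, whereas you re-run its continuous-approximation argument inline via Lemma~\ref{lem-limit-2}.
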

\begin{proof}
Note that
\begin{small}
    \begin{align*}
    &\bigg|{\iint\limits_{K}} g_n(x,y) a_{\delta_n}(x-y)\,\dup{x}\,\dup{y} - \overline{a} {\iint\limits_{K}} g(x,y) \,\dup{x}\,\dup{y} \bigg| \\
    &\leq \bigg|{\iint\limits_{K}} \big(g_n(x,y)-g(x,y)\big) a_{\delta_n}(x-y)\,\dup{x}\,\dup{y}\bigg|
    + \bigg|{\iint\limits_{K}} g(x,y) \big( a_{\delta_n}(x-y)-\overline{a}\big)\,\dup{x}\,\dup{y} \bigg|  \\
    &\leq \bigg[{\iint\limits_{K}} \big|g_n(x,y)-g(x,y)\big|^q\,\dup{x}\,\dup{y} \bigg]^{\frac 1q}
    \bigg[{\iint\limits_{K}} a_{\delta_n}(x{-}y)^p\,\dup{x}\,\dup{y}\bigg]^{\frac 1p}
    + \bigg|{\int\limits_{\rd}} H(z) \big( a_{\delta_n}(z)-\overline{a}\big)\,\dup{z} \bigg|
   \end{align*}
\end{small}
    where we use
    \begin{gather*}
        H(z) := \int_{\rd} \I_{{K}}(y+z,y) g(y+z,y)\,\dup{y},\quad  z\in \rd.
    \end{gather*}
    Since $K$ is a compact subset of $\rd \times \rd$,  $H$ has compact support, hence $H \in L^q_\loc(\rd)$. Because of Lemma~\ref{cor-1}, the second term tends to $0$; the first term also tends to $0$ since $g_n\to g$ in $L^q$, and
    $\sup_{0<\delta<1}\iint_{K} a_{\delta}(x-y)^p\,\dup{x}\,\dup{y}$ is finite. We prove this only for $d=1$, the arguments for $d>1$
    just have heavier notation. Without loss of generality we may assume that $K = L\times L$ for $L=[-N,N]\subset\real$ and $N\in\nat$. Now take $k := \entier{2N/{\delta}}+1 \in\mathds{N}$, the smallest integer which is bigger or equal $2N/{\delta}$. We have
    \begin{align*}
        \iint_{K} a_{\delta}(x-y)^p\,\dup{x}\,\dup{y}
        &= \int_{-N}^N \int_{-N}^N a_{\delta}(x-y)^p\,\dup{x}\,\dup{y}\\
        &=\int_{-N}^N \left(\int_{-N+y}^{N+y} a_{\delta}(x)^p\,\dup{x} \right) \dup{y} \\
        &\leq \int_{-N}^N \left(\int_{-2N}^{2N} a_{\delta} (x)^p\,\dup{x}\right) \dup{y} \\
        &= 2N \int_{-2N}^{2N} a_{\delta} (x)^p \,\dup{x}
        =: 2N\cdot\mathrm{I}
    \end{align*}
    where
    \begin{align*}
        \mathrm{I}
        =\int_{-2N}^{2N} a_{\delta}(z)^p\, \dup{z}
        &= \delta \int_{-2N/\delta}^{2N/\delta} a(z)^p \,\dup{z}\\
        &\leq \delta \int_{-k}^{k-1} a(z)^p \,\dup{z}\\
        &= \delta \sum_{\ell =-k}^{k-1} \int_{\ell}^{\ell+1} a(z)^p \,\dup{z}.
    \end{align*}
    Because of the periodicity of $a$, we find that
    \begin{gather*}
        \mathrm{I} \leq \delta \sum_{\ell =-k}^{k-1} \int_{0}^{1} a(z+\ell)^p \,\dup{z}
        = 2k\delta \int_0^1 a(z)^p\,\dup{z}
        \leq 2(2N + 1)\int_0^1 a(z)^p\,\dup{z}.
    \qedhere
    \end{gather*}
\end{proof}

Recall that a sequence of closed forms  $\{(\form^n,\dom^n)\}_{n\in\nat}$ defined on $L^2(\rd)$ is called \emph{Mosco-convergent}
to a form $(\form,\dom)$,  if the following two conditions are satisfied.  As usual, we extend $\form^n$ and $\form$ to the whole space $L^2(\rd)$ by setting $\form^n(u,u)=\infty$, resp.\ $\form(u,u)=\infty$, if $u\notin\dom^n$, resp.\ $u\notin\dom$.
\begin{description}
\item[\textbf{(M1)}]
    For all $u\in L^2(\rd)$ and all sequences $(u_n)_{n\in\nat}\subset L^2(\rd)$ such that $u_n \rightharpoonup u$ (weak convergence in $L^2$) we have
    $\displaystyle
        \liminf_{n\to\infty} \form^n(u_n,u_n) \geq \form(u,u).
    $
\item[\textbf{(M2)}]
    For every $u\in\dom$ there exist elements $u_n\in\dom^n$, $n\in\nat$, such that $u_n\to u$ (strong convergence in $L^2$) and
    $\displaystyle
        \limsup_{n\to\infty} \form^n(u_n,u_n)\leq\form(u,u).
    $
\end{description}
Note that \textbf{(M1)} entails that we have $\lim_{n\to\infty} \form^n(u_n,u_n)=\form(u,u)$ in \textbf{(M2)}.

\medskip
We can now state the main result of our paper.
Together with Remark~\ref{rem}, it can be seen as the Dirichlet form apporach to the problem discussed in \cite{HIT77} and \cite{T92,Sa16}. The paper \cite{KZ18} has, using completely different techniques, similar results for stable-like operators and forms, which include also some non-symmetric and non-translation invariant settings.
\begin{theorem}\label{thm}
    Assume that \eqref{periodic-1}--\eqref{periodic-4} hold for the functions $a$ and $\nu$,
    and let $\nu$ be locally bounded as a function defined on $\rd\setminus\{0\}$.
    Let $\{\delta_n\}_{n\in\nat}$ be a monotonically decreasing sequence of positive numbers such that
    $\delta_n \to 0$ as $n\to \infty$. For each $n\in\nat$ we consider the Dirichlet forms
    $(\form^n, \dom^n):=(\form^{\delta_n}, \dom^{\delta_n})$ defined in \eqref{form-delta}.
    The Dirichlet forms $(\form^n, \dom^n)$ converge to $(\form, \dom)$ in the sense of Mosco.
    The limit $(\form, \dom)$ is the closure of $(\form, C_0^{\lip}(\rd))$ which is given by
    \begin{gather*}
        \form(u,v)
        := \overline{a} \iint_{\real\times\real}\big(u(x)-u(y)\big)\big(v(x)-v(y)\big) \nu(y-x)\,\dup{y}\,\dup{x}.
    \end{gather*}
   \end{theorem}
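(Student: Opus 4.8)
The plan is to reduce both Mosco conditions to a single convergence statement on the form core $C_0^{\lip}(\rd)$ and then run the standard bilinear (convexity) arguments. After the substitution $h=y-x$ I write, for $u,v\in C_0^{\lip}(\rd)$,
\[
  \form^n(u,v)=\int_{h\neq 0}G^{u,v}(h)\,\nu(h)\,a_{\delta_n}(h)\,dh,\qquad
  G^{u,v}(h):=\scalp{u(\sbullet+h)-u}{v(\sbullet+h)-v}_{L^2(\rd)}.
\]
Because $u,v$ are Lipschitz with compact support, $|G^{u,v}(h)|\le C\,(1\wedge|h|^2)$ and $G^{u,v}$ is continuous (strong $L^2$-continuity of translation), so $g^{u,v}:=G^{u,v}/(1\wedge|h|^2)$ is bounded and continuous on $\rd\setminus\{0\}$. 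Writing $m_n(dh):=(1\wedge|h|^2)\nu(h)a_{\delta_n}(h)\,dh$ -- a finite measure on $\rd\setminus\{0\}$ by \eqref{periodic-3} -- we have $\form^n(u,v)=\int g^{u,v}\,dm_n$, and the target form is $\form(u,v)=\int g^{u,v}\,dm_\infty$ with $m_\infty:=\bar a\,(1\wedge|h|^2)\nu(h)\,dh$. The key point is that condition \textbf{(M2)} will require the \emph{full} convergence $m_n\to m_\infty$, whereas \textbf{(M1)} will only need its restriction to compact annuli.

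The engine of the proof is the weak convergence $m_n\to m_\infty$ on $\rd\setminus\{0\}$. Vague convergence against $C_0(\rd\setminus\{0\})$ is immediate from Corollary~\ref{cor-1}: a test function supported in an annulus $\{r\le|h|\le R\}$ multiplies $(1\wedge|h|^2)\nu(h)$, which is bounded there because $\nu$ is locally bounded, hence lies in $L^q$, and Corollary~\ref{cor-1} turns $a_{\delta_n}$ into $\bar a$. To upgrade vague to weak convergence I must rule out escape of mass to $0$ and to $\infty$, i.e.\ establish the tightness
\[
  \lim_{r\downarrow 0}\ \sup_n\int_{|h|\le r}|h|^2\nu(h)a_{\delta_n}(h)\,dh=0,
  \qquad
  \lim_{R\uparrow\infty}\ \sup_n\int_{|h|>R}\nu(h)a_{\delta_n}(h)\,dh=0.
\]
These I would obtain from the $Q$-periodicity of $a$: cover the relevant region by unit cells, apply H\"older's inequality on each cell, and use the uniform bound $\int_{\mathrm{cell}}a_{\delta_n}(h)^p\,dh\le C$ already isolated inside the proof of Corollary~\ref{cor-1}; what remains are the cell-wise $L^q$-masses of $(1\wedge|h|^2)\nu$, to be summed or made small using \eqref{levy}--\eqref{periodic-3}. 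Establishing this tightness -- the exact place where the singularity of $\nu$ at the origin and its decay at infinity must be matched against the oscillation of $a_{\delta_n}$ -- is the main obstacle of the proof. Granting it, $\int g\,dm_n\to\int g\,dm_\infty$ for every bounded continuous $g$, and taking $g=g^{u,v}$ yields $\form^n(\phi,\psi)\to\form(\phi,\psi)$ for all $\phi,\psi\in C_0^{\lip}(\rd)$.

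Condition \textbf{(M2)} then follows quickly. For $u\in C_0^{\lip}(\rd)$ the constant sequence $u_n:=u$ converges to $u$ in $L^2$ and satisfies $\form^n(u,u)\to\form(u,u)$ by the previous paragraph. For arbitrary $u\in\dom$ I choose $\phi_k\in C_0^{\lip}(\rd)$ with $\phi_k\to u$ in the form norm $(\form(\sbullet,\sbullet)+\|\sbullet\|_{L^2}^2)^{1/2}$ (possible since $(\form,\dom)$ is the closure of $(\form,C_0^{\lip}(\rd))$) and set $u_n:=\phi_{k(n)}$ with $k(n)\uparrow\infty$ slowly enough that $u_n\to u$ in $L^2$ and $\limsup_n\form^n(u_n,u_n)\le\form(u,u)$; this is the required recovery sequence. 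Note that this is precisely where the tightness is used, through the full convergence $\form^n(\phi_k,\phi_k)\to\form(\phi_k,\phi_k)$.

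For \textbf{(M1)}, let $u_n\rightharpoonup u$ in $L^2(\rd)$, put $M:=\sup_n\|u_n\|_{L^2}<\infty$, and fix an annulus $A=\{\epsilon\le|h|\le R\}$ and a trial function $v\in C_0^{\lip}(\rd)$. Since the integrand defining $\form^n$ is nonnegative, dropping the complement of $A$ and completing the square gives
\[
  \form^n(u_n,u_n)\ \ge\ \int_A\Big(2\scalp{u_n(\sbullet+h)-u_n}{v(\sbullet+h)-v}_{L^2}-\|v(\sbullet+h)-v\|_{L^2}^2\Big)\nu(h)a_{\delta_n}(h)\,dh.
\]
On $A$ the kernels are bounded away from the origin, so the first term equals $\scalp{u_n}{W_nv}_{L^2}$ with $W_nv(x)=2\int_A\big(2v(x)-v(x+h)-v(x-h)\big)\nu(h)a_{\delta_n}(h)\,dh$; this $W_nv$ has fixed compact support, is uniformly bounded, and converges pointwise by Corollary~\ref{cor-1}, hence $W_nv\to Wv$ strongly in $L^2$, so the weak--strong pairing gives the limit $\scalp{u}{Wv}_{L^2}$. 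The second term is a two-variable integral over a fixed compact set with integrand independent of $n$, so it converges by Corollary~\ref{cor-2}; crucially, no tightness is needed here. Passing to the $\liminf$ and then taking the supremum over $v\in C_0^{\lip}(\rd)$ (which, the annular form being bounded on $L^2$, reconstructs its annular energy) yields $\liminf_n\form^n(u_n,u_n)\ge\bar a\int_A G^{u,u}(h)\nu(h)\,dh$; letting $\epsilon\downarrow0$ and $R\uparrow\infty$, monotone convergence produces $\liminf_n\form^n(u_n,u_n)\ge\form(u,u)$, which is \textbf{(M1)} (this also covers $u\notin\dom$, the right-hand side then being $+\infty$).
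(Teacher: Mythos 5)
Your proof of \textbf{(M1)} is correct but follows a genuinely different route from the paper. The paper mollifies the weakly convergent sequence ($u_{n,\epsilon}=J_\epsilon[u_n]$), uses Jensen's inequality to pass the energy estimate through the mollifier, applies Corollary~\ref{cor-2} on compact sets off the diagonal, and then needs the structure theorem of \cite{SU12} together with Banach--Alaoglu and Banach--Saks to conclude $u\in\dom$ and identify the limit. You instead complete the square against a trial function $v\in C_0^{\lip}(\rd)$ on an annulus $A$ in the jump variable, rewrite the cross term as $\scalp{u_n}{W_nv}_{L^2}$ with $W_nv\to Wv$ strongly in $L^2$ (pointwise convergence from Corollary~\ref{cor-1} plus a uniform bound and fixed compact support), exploit the weak--strong pairing, and recover the annular energy by taking the supremum over $v$, using that the annular form is $L^2$-bounded. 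This avoids the mollifier, Jensen, and the Banach--Alaoglu/Banach--Saks compactness step entirely; it is more elementary and self-contained, at the cost of the (standard, and also implicitly used by the paper) identification of $\sup_A\form_A(u,u)$ with $\form(u,u)$ for the translation-invariant closed form, finite exactly when $u\in\dom$. Your observation that no tightness is needed for \textbf{(M1)} because everything is localized to $A$ and then released by monotone convergence is exactly right.

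The one genuine gap is the step you yourself flag as ``the main obstacle'': the convergence $\form^n(u,u)\to\form(u,u)$ for $u\in C_0^{\lip}(\rd)$, which drives \textbf{(M2)}. Vague convergence of $a_{\delta_n}(h)\,dh$ (Lemma~\ref{lem-limit-2}) and weak $L^p_{\loc}$ convergence (Corollary~\ref{cor-1}) only control test functions supported in compact subsets of $\rd\setminus\{0\}$, whereas the integrand $h\mapsto\|u(\sbullet+h)-u\|_{L^2}^2\,\nu(h)$ is neither compactly supported (it tends to $2\|u\|_{L^2}^2\nu(h)$ at infinity) nor bounded near $h=0$ when $\nu$ has a genuine singularity there. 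So the uniform-integrability statements you write down, $\lim_{r\downarrow0}\limsup_n\int_{|h|\le r}|h|^2\nu(h)a_{\delta_n}(h)\,dh=0$ and the analogue at infinity, really are needed, and your sketch (H\"older cell by cell against the uniform $L^p$ bound on $a_{\delta_n}$) leaves to be checked that the resulting cell-wise $L^q$-norms of $(1\wedge|h|^2)\nu$ are summable -- which does not follow from \eqref{levy}--\eqref{periodic-3} alone without further argument (only the $L^1$-masses of $\nu$ over cells are controlled, and Example~\ref{exa} shows how the singularities of $a$ and $\nu$ can interact badly). Until this is closed, \textbf{(M2)} is not proved. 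You should be aware that the paper's own proof of \textbf{(M2)} simply invokes \eqref{eq:lem-limit-2} at this point and does not address the escape of mass to $0$ or to $\infty$ either; so you have correctly isolated the delicate point rather than invented a new difficulty, but your write-up, like the paper's, does not yet contain a complete argument for it.
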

\begin{proof}
    We will check the conditions \textbf{(M1)} and \textbf{(M2)} of Mosco convergence. For \textbf{(M1)} we take any $u \in L^2(\rd)$ and any sequence $\{u_n\}\subset L^2(\rd)$ such that $u_n\rightharpoonup u$ as $n\to\infty$. Without loss, we may assume that $\liminf_{n\to\infty} \form^n(u_n,u_n)<\infty$.

    We will use the Friedrichs mollifier. This is a family of convolution operators
    \begin{gather*}
        J_{\epsilon}[u](x)=\int_{\rd} u(x-y)\rho_{\epsilon}(y)\,\dup{y}, \quad  x\in \rd,\; \epsilon>0,
    \end{gather*}
    given by the kernels $\{\rho_{\epsilon}\}_{\epsilon>0}$ for a $C^\infty$-kernel $\rho:\rd\to [0,\infty)$ satisfying
    \begin{gather*}
        0\leq \rho(x)=\rho(-x),\quad \int_{\rd} \rho(x)\,\dup{x}=1, \quad
        \supp \, [ \rho ] = \big\{ x \in \rd : \ |x|\leq 1\big\} \\
 {\rm and} \quad  \rho_{\epsilon}(x):=\rho(x/\epsilon), \quad {\rm for} \ \epsilon>0 \  \ {\rm and} \ \  x\in \rd. \qquad
    \end{gather*}
    We then have
    \begin{align*}
    &\form^n(u_n,u_n)\\
        &\quad= \iint_{x\neq y} \bigl(u_n(x)-u_n(y)\bigr)^2 a_{\delta_n}(y-x) \nu(y-x)\,\dup{y}\,\dup{x} \\
        &\quad= \int_\rd \left(\iint_{x\neq y} \bigl(u_n(x)-u_n(y)\bigr)^2 a_{\delta_n}(y-x) \nu(y-x)\,\dup{y}\, \dup{x}\right)    \rho_{\epsilon}(z)\,\dup{z} \\
        &\quad= \int_\rd \left(\iint_{x\neq y} \bigl(u_n(x-z)-u_n(y-z)\bigr)^2 a_{\delta_n}(y-x) \nu(y-x)\,\dup{y}\, \dup{x}\right) \rho_{\epsilon}(z)\,\dup{z},
    \intertext{and using the Fubini theorem and Jensen's inequality yields, for any compact set $K$ so that $K \subset \rd\times \rd \setminus \{(x,x): x\in\rd \}$,}
    &\form^n(u_n,u_n)\\
        &\quad= \iint_{x\neq y} \left( \int_\rd \bigl(u_n(x-z)-u_n(y-z)\bigr)^2  \rho_{\epsilon}(z)\,\dup{z} \right) a_{\delta_n}(y-x) \nu(y-x)\,\dup{y}\, \dup{x}\\
        &\quad\geq \iint_{x\neq y} \left( \int_{\rd} \bigl(u_n(x-z)-u_n(y-z)\bigr)\rho_{\epsilon}(z)\,\dup{z} \right)^2 a_{\delta_n}(y-x) \nu(y-x)\,\dup{y} \,\dup{x} \\
        &\quad\geq \iint_{K} \bigl(J_{\epsilon}[u_n](x) - J_{\epsilon}[u_n](y) \bigr)^2  a_{\delta_n}(y-x) \nu(y-x)\,\dup{y}\, \dup{x}.
    \end{align*}
    Note that $\sup_{n\in\nat}\|u_n\|_{L^2}<\infty$ because of the weak convergence $u_n\rightharpoonup u$.  Using again weak convergence $u_n\rightharpoonup u$, we conclude that $u_{n,\epsilon} = J_{\epsilon}[u_n]$ converges pointwise to $u_{\epsilon}:=J_{\epsilon}[u]$. Using the local boundedness of $\nu$ on $\rd\setminus\{0\}$ and the fact that $K$ is a compact set satisfying $K \subset \rd \times\rd \setminus\{(x,x): x \in \rd\}$, we see that $\bigl(u_{n,\epsilon}(x) - u_{n,\epsilon}(y) \bigr)^2\nu(y-x)$ converges in $L^q(K)$ to
    \begin{gather*}
        \bigl(u_{\epsilon}(x) - u_{\epsilon}(y) \bigr)^2\nu(y-x)
        \quad \text{as $n\to\infty$}.
    \end{gather*}
    From \eqref{lem-limit-3} we get
    \begin{align*}
    \liminf_{n\to\infty}\form^n(u_n,u_n)
        &\geq \liminf_{n\to\infty}\form^n(u_{n,\epsilon},u_{n,\epsilon}) \\
        &\geq \liminf_{n\to\infty} \iint_{K} \bigl(u_{n,\epsilon}(x) - u_{n,\epsilon}(y) \bigr)^2  a_{\delta_n}(y-x) \nu(y-x)\,\dup{y}\, \dup{x}  \\
        &= \overline{a}\iint_{K} \bigl(u_{\epsilon}(x) - u_{\epsilon}(y) \bigr)^2 \nu(y-x)\,\dup{y}\, \dup{x}.
    \end{align*}
    Since $K \subset \rd\times\rd\setminus\{(x,x) : x\in \rd\}$ is an arbitrary compact set, we can approximate $\rd \times \rd \setminus\{(x,x) : x\in\rd\}$ by such sets. Using monotone convergence and the fact that the left hand side is independent of $K$, we arrive at
    \begin{align} \notag
        \sup_{0<\epsilon<1} \form(u_{\epsilon},u_{\epsilon})
        &= \sup_{0<\epsilon<1} \!\!\!\!\sup_{\substack{K \text{\ compact}\\ K \subset \rd \times \rd \setminus\{(x,x): x\in \rd\}}}  \!\!\!\!
        \overline{a}\iint_{K} \bigl(u_{\epsilon}(x) - u_{\epsilon}(y) \bigr)^2 \nu(y-x)\,\dup{y}\, \dup{x}\\
        \label{limit0}
        &\leq \liminf_{n\to\infty}\form^n(u_n,u_n) < \infty.
    \end{align}%
    Theorem~2.4 in \cite{SU12} now shows that $u_{\epsilon} \in \dom\cap C^{\infty}(\rd)$ for each $\epsilon\in (0,1)$. Since $J_\epsilon$ is an $L^2$-contraction operator for each $\epsilon>0$, we see that the family $\{u_\epsilon\}_{\epsilon>0}$, $u_\epsilon = J_{\epsilon}[u]$, is bounded w.r.t.\ $\form_1(\sbullet,\sbullet):=\form(\sbullet,\sbullet)+(\sbullet,\sbullet)_{L^2}$ by \eqref{limit0}. The Banach--Alaoglu theorem guarantees that there is an $\form_1$-weakly convergent subsequence $u_{\epsilon(n)}$, $\epsilon(n)\downarrow 0$, and a function $v$ so that $u_{\epsilon(n)}$ converges $\form_1$-weakly to $v\in\dom$. Using the Banach--Saks theorem shows that the Ces\`aro means $\frac 1n \sum_{k=1}^n u_{\epsilon(n_k)}$ of a further subsequence converge $\form_1$-strongly, hence in $L^2(\rd)$, to $v$. As $u_\epsilon$ converges to $u$ in $L^2(\rd)$, we can identify the limit as $u=v$. In particular, $u \in \dom$ and
    \begin{equation*}
        \liminf_{n\to \infty} \form^{n}(u_n,u_n)\geq \form (u,u).
    \end{equation*}
    In order to see  \textbf{(M2)}, we use the regularity of the Dirichlet form $(\form, \dom)$; therefore, it is enough to consider $u \in C_0^{\lip}(\rd)$. Set $u_n=u\in C_0^{\lip}(\rd)$ for each $n$, and $L := \supp u$ and $G:= L+B_1(0)$. Because of the symmetry of the form we have
    \begin{equation*}
        \form^n(u,u) = \form^n_{G\times G}(u,u)+ 2\form^n_{G\times G^c}(u,u)
    \end{equation*}
    where, using the fact that $L=\supp u\subset G$,
    \begin{align*}
        \form^n_{G\times G}(u,u)
        &= \iint_{G\times G} \big(u(x)-u(y)\big)^2 a_{\delta_n}(y-x) \nu(y-x)\,\dup{y}\,\dup{x},\\
        \form^n_{G\times G^c}(u,u)
        &= \iint_{L\times G^c} u^2(x) a_{\delta_n}(y-x) \nu(y-x)\,\dup{y}\,\dup{x}.
    \end{align*}
    Using Corollary~\ref{cor-2} we see that
    \begin{equation*}
        \lim_{n\to\infty} \form^n_{G\times G}(u,u) = \overline{a} \iint_{G\times G} \big(u(x)-u(y)\big)^2 \nu(y-x)\,\dup{y}\,\dup{x}.
    \end{equation*}
    For the other part we get
    \begin{align*}
        \form^n_{G\times G^c}(u,u)
        &= \int_\rd \left[\int_L u^2(x)\I_{G^c}(x+h)\,\dup{x}\right] \nu(h) a_{\delta_n}(h)\,\dup{h}\\
        &\leq \epsilon + \int_{|h|\leq R} \left[\int_L u^2(x)\I_{G^c}(x+h)\,\dup{x}\right] \nu(h) a_{\delta_n}(h)\,\dup{h}
    \end{align*}
    for any $\epsilon>0$ and some suitable $R=R_\epsilon$; note that $\epsilon$ and $R$ can be chosen independently of $n$. This is due to our assumption \eqref{periodic-4} and the fact that the expression in the square brackets is a continuous bounded function in $h$. Now we can use Lemma~\ref{cor-1} for the limit $n\to\infty$; if we then let $R\to\infty$ and $\epsilon\to 0$, we get
    \begin{align*}
        \limsup_{n\to\infty}\form^n_{G \times G^c}(u,u)
        &\leq \overline{a} \iint_{\rd\times L} u^2(x)\I_{G^c}(x+h)\nu(h)\,\dup{x} \,\dup{h}\\
        &= \overline{a} \iint_{L\times G^c} u^2(x)\,\nu(x-y)\,\dup{y}\,\dup{x}.
    \end{align*}
    Combining all of the above calculations, it follows that
    \begin{align*}
    \limsup_{n\to \infty}  \form^n(u_n, u_n)
    &= \limsup_{n\to \infty} \Big( \form^n_{G\times G} (u,u) + 2 \form^n_{G\times G^c} (u,u) \Big)  \\
    &\leq \overline{a} \iint_{G\times G} \big(u(x)-u(y)\big)^2 \nu(y-x)\,\dup{y}\,\dup{x} \\
    &\qquad \mbox{} + 2 \overline{a} \iint_{L\times G^c} \big(u(x)-u(y)\big)^2 \nu(y-x)\,\dup{y}\,\dup{x}  \\
    & = \form(u,u),
    \end{align*}
    finishing the proof.
\end{proof}

\begin{remark}\label{rem}
    Suppose that the function $a$ on $\real$ satisfies \eqref{periodic-1}--\eqref{periodic-3}, and $\nu$ is given by $\nu(x)=|x|^{-1-\alpha}$, $x\in \real\setminus\{0\}$, for some $0<\alpha<2$.  Then the following quadratic form defines a translation invariant regular symmetric Dirichlet form on $L^2(\real)$:
    \begin{gather*}
        \tilde{\form}(u,v)
        :=\iint\limits_{x\neq y} \big(u(x)-u(y)\big)\big(u(x)-u(y)\big) \frac{a(x-y)}{|x-y|^{1+\alpha}} \,\dup{x} \,\dup{y},
        \quad u,v \in C_0^{\lip}(\real).
    \end{gather*}
    Let $\tilde X = (\tilde{X}(t))_{t\geq 0}$ be the symmetric L\'evy process on $\real$ associated with the Dirichlet form $(\tilde{\form}, \tilde{\dom})$ on $L^2(\real)$. For any $n\in\mathds{N}$, set
    \begin{gather*}
        X^{(n)}(t):= \epsilon_n \tilde{X}(\epsilon_n^{-\alpha}t), \quad t>0.
    \end{gather*}
    Then $X^{(n)} = (X^{(n)}(t))_{t\geq 0}$ is also a symmetric L\'evy process
    and we denote for each $n\in\nat$ by $(\form^{(n)}, \dom^{(n)})$ the corresponding Dirichlet form. The semigroup $\{T^{(n)}_t\}_{t>0}$ generated by $(\form^{(n)}, \dom^{(n)})$ is given by
    \begin{align*}
        T^{(n)}_t f(x)
        &=\mathds{E}\left[f(X^{(n)}(t)) \:\middle\vert\: X^{(n)}(0)=x\right]\\
        &=\mathds{E}_{x/\epsilon_n}\left[f(\epsilon_n \tilde{X}(\epsilon_n^{-\alpha}t))\right]
        =\left(\tilde{T}_{\epsilon_n^{-\alpha}t}f(\epsilon_n \cdot)\right)(\epsilon_n^{-1}x),
        \quad  x\in \real.
    \end{align*}
    Since the Dirichlet form $(\form^{(n)},\dom^{(n)})$ can be obtained by
    \begin{gather*}
        \form^{(n)}(u,v)=\lim_{t \downarrow 0} \frac 1t \big(u-T^{(n)}_tu,v\big)_{L^2},
    \end{gather*}
    it follows for $t>0$ that
    \begin{align*}
        \frac 1t \big(u-T^{(n)}_tu,v\big)_{L^2}
        &= \frac 1t \int_\real \big[ u(x) -T^{(n)}_t u(x)\big] v(x)\,\dup{x} \\
        &= \frac 1t \int_\real \big[ u\big(\epsilon_n \cdot \epsilon^{-1}_n x\big) - \big(\tilde{T}_{\epsilon_n^{-\alpha}t} u(\epsilon_n \sbullet)\big) \big(\epsilon^{-1}_n x\big) \big] v(x)\,\dup{x} \\
        &= \frac 1{\epsilon_n^{\alpha}} \cdot \frac 1s \int_\real \big[ u(\epsilon_n \xi) - \big(\tilde{T}_s u(\epsilon_n \sbullet)\big)(\xi)\big] v(\epsilon_n \xi) \epsilon_n \, \dup{\xi}
    \end{align*}
    where we use the notation $\xi=\epsilon_n^{-1}x$ and $s=\epsilon_n^{-\alpha} t$. Letting $s\to 0$, hence $t\to 0$, yields
    \begin{align*}
        \lim_{t\to 0}&\frac 1t \big(u-T^{(n)}_tu,v\big)_{L^2}\\
        &= \epsilon_n^{1-\alpha} \cdot \tilde{\form} \big(u(\epsilon_n \sbullet), v(\epsilon_n \sbullet) \big) \\
        &= \epsilon_n^{1-\alpha} \iint_{x\neq y} \big(u(\epsilon_n x)-u(\epsilon_n y)\big)\big(v(\epsilon_n x)-v(\epsilon_n y)\big) \frac{a(x-y)}{|x-y|^{1+\alpha}}\,\dup{x}\,\dup{y} \\
        &= \epsilon_n^{1-\alpha} \iint_{x\neq y} \big(u(x)-u(y)\big)\big(v(x)-v(y)\big) \frac{a\big(\epsilon_n^{-1}(x-y)\big)}{|x-y|^{1+\alpha}} \epsilon_n^{1+\alpha} \frac{\dup{x}}{\epsilon_n}\frac{\dup{y}}{\epsilon_n}  \\
        &= \iint_{x\neq y} \big(u(x)-u(y)\big)\big(v(x)-v(y)\big) \frac{a\big(\epsilon_n^{-1}(x-y)\big)}{|x-y|^{1+\alpha}} \,\dup{x}\,\dup{y}\\
        &=\form^{(n)}(u,v).
    \end{align*}
    Since Mosco convergence entails the convergence of the semigroups, hence the finite-dimensional distributions (fdd) of the processes,
    we may combine the above calculation with Theorem~\ref{thm} to get the following result: {\itshape The processes $X^{(n)}$ associated
    with $(\form^{(n)}, \dom^{(n)})$ -- these are obtained by scaling $t\mapsto \epsilon_n^{-\alpha}t$ and $x\mapsto \epsilon_n x$ from the process
    $\tilde X$ given by $(\tilde{\form}, \tilde{\dom})$ -- converge, in the sense of finite-dimensional distributions, to the process $X$ associated with $(\form, \dom)$.}
\end{remark}


\small\itshape\flushright
Ren\'e L.\ Schilling:
Institute of Stochastics,
Faculty of Mathematics,\\
Technische Universit\"{a}t Dresden,
01062 Dresden, Germany. \\
\texttt{rene.schilling@tu-dresden.de}

\medskip
Toshihiro Uemura:
Department of Mathematics,
Faculty of Engineering Science,\\
Kansai University,
Suita, Osaka 564-8680, Japan \\
\texttt{t-uemura@kansai-u.ac.jp}
\end{document}